

\documentclass[11pt,pdftex, reqno]{amsart}


\usepackage{amsmath, amsthm}
\usepackage{eucal}

\usepackage{palatino}
\usepackage{euler}

\usepackage{amssymb}
\usepackage{amscd}
\usepackage{latexsym}
\usepackage{epsfig}
\usepackage{graphicx, xcolor}
\usepackage{amsfonts}
\usepackage{psfrag}
\usepackage{caption}
\usepackage{setspace}

\usepackage{fullpage}
\usepackage{draftcopy}
\usepackage{tikz}

\usepackage{pifont}

\usepackage{verbatim}

%

 

\usepackage{dsfont}

\usepackage{url}
\usepackage{cite}

\usepackage{dsfont}

\usepackage[margin=1in]{geometry}


\input xy
\xyoption{all}
\UseComputerModernTips



\oddsidemargin=0pt
\evensidemargin=0pt
\topmargin=0in

\numberwithin{equation}{section}
\numberwithin{figure}{section}


\newtheorem{theorem}{Theorem}[section]

\newtheorem{lemma}[theorem]{Lemma}

\theoremstyle{definition}


\newcommand{\C}{{\mathbb{C}}}

\newcommand{\fl}{\mathscr{F}\ell}

\newcommand{\defi}{\textbf}
\newcommand{\dom}{\backslash}

\newcommand{\init}{\mathop{\mathrm{init}}}



%
%

%
%

\setlength{\marginparwidth}{0.7in}

%
%



\title{Generating the Ideals Defining Unions of Schubert Varieties}
\author{Anna Bertiger}
\date{\today}                            

\begin{document}

\maketitle

\begin{abstract}
This note computes a Gr\"obner basis for the ideal defining a union of Schubert varieties.  More precisely, it computes a Gr\"obner basis for unions of schemes given by northwest rank conditions on the space of all matrices of a fixed size.  Schemes given by northwest rank conditions include classical determinantal varieties and matrix Schubert varieties--closures of Schubert varieties lifted from the flag manifold to the space of matrices.
\end{abstract}

\section{Introduction}

We compute a Gr\"obner basis, and hence ideal generating set, for the ideal defining a union of schemes each given by northwest rank conditions with respect to an ``antidiagonal term order."  A \defi{scheme defined by northwest rank conditions} is any scheme whose defining equations are of the form ``all $k \times k$ minors in the northwest $i \times j$ sub-matrix of a matrix of variables," where $i,j,$ and $k$ can take varying values.  These schemes represent a generalization of classical determinantal varieties--those varieties with defining equations all $(r+1) \times (r+1)$ minors of a matrix of variables.  One geometrically important collection of schemes defined by northwest rank conditions is the set of matrix Schubert varieties.  Matrix Schubert varieties are closures of the lift of Schubert varieties from the complete flag manifold to matrix space \cite{Fulton1992}.  In general, a \defi{matrix Schubert variety} for a partial permutation $\pi$ is the subvariety of matrix space given by the rank conditions that the northwest $i \times j$ sub-matrix must have rank at most the number of $1$s in the northwest $i \times j$ sub-matrix of the partial permutation matrix for $\pi$.  Notice that the set of matrix Schubert varieties contains the set of classical determinantal varieties, which are the zero locus of all minors of a fixed size on the space of all matrices of fixed size.  

Matrix Schubert varieties associated to honest, that is non-partial, permutations are the closures of the lifts of the corresponding Schubert varieties in the flag manifold, $B_-\setminus GL_n$.  If $\overline{X}_\pi$ is the matrix Schubert variety for an honest permutation $\pi$ the projection 
\[
\{\text{full rank matrices}\} \twoheadrightarrow B_-\setminus GL_n\C=\fl \C^n
\]
sends $\overline{X}_\pi \cap GL_n\C$ onto the Schubert variety $X_\pi \subseteq \fl\C^n$.  Schubert varieties, orbits of $B_+$, stratify $\fl \C^n$ and give a basis for $H^*(\fl \C^n)$. It is this application that led to the introduction of matrix Schubert varieties in \cite{Fulton1992}.  Knutson and Miller showed that matrix Schubert varieties have a rich algebro-geometric structure corresponding to beautiful combinatorics \cite{KnutsonMiller 2005}.  Fulton's generators are a Gr\"obner basis with respect to any antidiagonal term order and their initial ideal is the Stanley-Reisner ideal of the ``pipe dream complex."   Further, Knutson and Miller show that  the pipe dream complex is shellable, hence the original ideal is Cohen-Macaulay.  Pipe dreams, the elements of the pipe dream complex, were originally called RC graphs and were developed by Bergeron and Billey \cite{BergeronBilley} to describe the monomials in polynomial representatives for the classes corresponding to Schubert varieties in $H^*(\fl \C^n)$. 

The importance of Schubert varieties, and hence matrix Schubert varieties, to other areas of geometry has become increasing evident.  For example, Zelevinsky \cite{Zelevinski} showed that certain quiver varieties, sequences of vector space maps with fixed rank conditions, are isomorphic to Schubert varieties.   Knutson, Miller and Shimozono, \cite{KMS} produce combinatorial formulae for quiver varieties using many combinatorial tools reminiscent of those for Schubert varieties.  

\subsection{Notation and Background}

Much of the background surveyed here can be found in \cite{MillerSturmfels}.  Let $B_-$ (respectively $B_+$) denote the group of invertible lower triangular (respectively upper triangular) $n \times n$ matrices.  Let $M=(m_{i,j})$ be a matrix of variables.  In what follows $\pi$ will be a possibly partial permutation, written in one-line notation $\pi(1)\ldots \pi(n),$  with entries for $\pi(i)$ undefined are written $\star$.   We shall write permutation even when we mean partial permutation in cases where there is no confusion.  A \defi{matrix Schubert variety} $\overline{X}_\pi$ is the closure $\overline{B_-\pi B_+}$ in the affine space of all matrices, where $\pi$ is a permutation matrix and $B_-$ and $B_+$ act by downward row and rightward column operations respectively.  Notice that for $\pi$ an honest permutation $\overline{X}_\pi$ is the closure of the lift of $X_\pi=B_- \dom \overline{B_- \pi B_+} \subseteq B_- \dom GL_n\C$ to the space of $n \times n$ matrices.  

The \defi{Rothe diagram} of a permutation is found by looking at the permutation matrix and crossing out all of the cells weakly below, and the cells weakly to the right of, each cell containing a $1$.  The remaining empty boxes form the Rothe diagram.  The \defi{essential boxes} \cite{Fulton1992} of a permutation are those boxes in the Rothe diagram that do not have any boxes of the diagram immediately south or east of them.  The Rothe diagrams for $2143$ and $15432$ are given in Figure \ref{fig:ess2143}.  In both cases the essential boxes are marked with the letter $e$.  

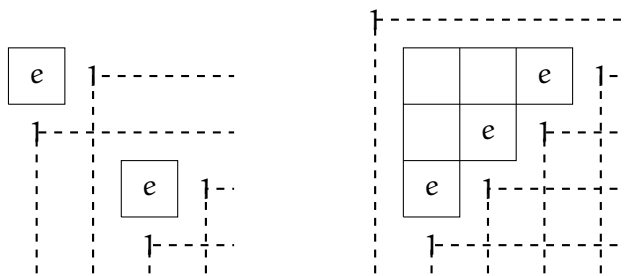
\begin{figure}[htbp]
\begin{center}
\begin{tikzpicture}[x=.75cm, y=.75cm] 
\draw (.5,2.5) node{$1$};
\draw [dashed, thick] (.5,0)--(.5,2.5)--(4,2.5);
\draw (1.5,3.5) node{$1$};
\draw [dashed, thick] (1.5,0)--(1.5,3.5)--(4,3.5);
\draw (3.5,1.5) node{$1$};
\draw [dashed, thick] (3.5,0)--(3.5,1.5)--(4,1.5);
\draw (2.5,.5) node{$1$};
\draw [dashed, thick] (2.5,0)--(2.5,.5)--(4,.5);
\draw (0,3)--(1,3)--(1,4)--(0,4)--(0,3);
\draw (.5,3.5) node{$e$};
\draw (2,1)--(3,1)--(3,2)--(2,2)--(2,1);
\draw (2.5,1.5) node{$e$};
\draw (6.5,4.5) node{$1$};
\draw [dashed, thick] (6.5,0)--(6.5,4.5)--(11,4.5);
\draw (10.5,3.5) node{$1$};
\draw [dashed, thick] (10.5,0)--(10.5,3.5)--(11,3.5);
\draw (9.5,2.5) node{$1$};
\draw [dashed, thick] (9.5,0)--(9.5,2.5)--(11,2.5);
\draw (8.5,1.5) node{$1$};
\draw [dashed, thick] (8.5,0)--(8.5,1.5)--(11,1.5);
\draw (7.5,.5) node{$1$};
\draw [dashed, thick] (7.5,0)--(7.5,.5)--(11,.5);
\draw (7,1)--(7,4)--(10,4)--(10,3)--(7,3);
\draw (7,2)--(9,2)--(9,4);
\draw (7,1)--(8,1)--(8,4);
\draw (7.5,1.5) node{$e$};
\draw (8.5,2.5) node{$e$};
\draw (9.5,3.5) node{$e$};
\end{tikzpicture}
\caption{The Rothe diagrams and essential sets of $2143$ (left) and $15432$ (right). }
\label{fig:ess2143}
\end{center}
\end{figure}

The \defi{rank matrix} of a permutation $\pi$, denoted $r(\pi)$, gives in each cell $r(\pi)_{ij}$ the rank of the $i \times j$ northwest-justified sub-matrix of the permutation matrix for $\pi$.  For example, the rank matrix of $15432$ is \[
\left(\begin{array}{ccccc}1 & 1 & 1 & 1 & 1 \\1 & 1 & 1 & 1 & 2 \\1 & 1 & 1 & 2 & 3 \\1 & 1 & 2 & 3 & 4 \\1 & 2 & 3 & 4 & 5\end{array}\right).
\]

\begin{theorem}[\cite{Fulton1992}]
Matrix Schubert varieties have radical ideal $I(\overline{X}_\pi)=I_\pi$ given by determinants representing conditions given in the rank matrix $r(\pi)$, that is, the $(r(\pi)_{ij}+1) \times (r(\pi)_{ij}+1)$ determinants of the northwest $i \times j$ sub-matrix of a matrix of variables.  In fact, it is sufficient to impose only those rank conditions $r(\pi)_{ij}$ such that $(i,j)$ is an essential box for $\pi$.  
\end{theorem}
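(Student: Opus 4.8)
The plan is to split the statement into a set-theoretic equality $V(I_\pi)=\overline{X}_\pi$, a scheme-theoretic (radicality) claim, and a purely combinatorial reduction to essential boxes, and to assemble these at the end. Since $\overline{X}_\pi=\overline{B_-\pi B_+}$ is the closure of a single $B_-\times B_+$ orbit it is irreducible; hence once I know that $I_\pi$ is radical and that $V(I_\pi)=\overline{X}_\pi$ as sets, I get $I_\pi=\sqrt{I_\pi}=I(V(I_\pi))=I(\overline{X}_\pi)$, which is the theorem.

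First I would establish the set-theoretic picture through the orbit stratification. Downward row operations and rightward column operations never change the rank of any northwest $i\times j$ submatrix: adding a multiple of row $a$ to a row $b>a$ is either an interior operation on the submatrix (when $b\le i$) or invisible to it (when $b>i$), and dually for columns. Hence every matrix in $B_-\pi B_+$ shares the rank matrix of $\pi$, and by lower semicontinuity of rank every matrix in the closure satisfies $\operatorname{rank}(M_{\le i,\le j})\le r(\pi)_{ij}$, giving $\overline{X}_\pi\subseteq V(I_\pi)$. For the reverse inclusion I would use that the rank matrix is a complete invariant of the $B_-\times B_+$ orbits, with partial permutation matrices as representatives, and that the closure order is exactly the entrywise order on rank matrices: $O_\tau\subseteq\overline{O_\sigma}$ if and only if $r(\tau)\le r(\sigma)$ entrywise. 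The forward direction is again semicontinuity; the reverse direction asks for an explicit one-parameter degeneration of $\sigma$ to $\tau$ in matrix space. Granting this, both $V(I_\pi)$ and $\overline{X}_\pi$ are exactly the union of those orbits $O_\sigma$ with $r(\sigma)\le r(\pi)$ entrywise, so they coincide as sets.

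Next I would show that the essential conditions already cut out this set, which is where the definition of the essential set earns its keep. The elementary inputs are that $\operatorname{rank}(M_{\le i,\le j})$ is weakly monotone in $i$ and $j$, and that appending one row or one column raises rank by at most one, so that $\operatorname{rank}(M_{\le i,\le j})\le\operatorname{rank}(M_{\le p,\le q})+(i-p)+(j-q)$ for $i\ge p$, $j\ge q$. Using these I would verify that every rank condition is a consequence of the essential ones: for a cell of the Rothe diagram, sliding southeast through diagram cells adds no new pivot, so the permutation-matrix rank is constant up to the diagram's southeast corner, which is essential, and that corner condition then bounds the rank at the cell; for the remaining cells one bounds the rank at $(i,j)$ by the condition at a weakly northwest essential box through the increment estimate, which is tight precisely because of how the essential set is defined. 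This step is finite bookkeeping on the Rothe diagram rather than a source of real difficulty.

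The genuinely hard part is radicality of $I_\pi$. Here I see two routes. The classical route invokes the Cohen--Macaulayness and primality of determinantal ideals via standard monomial theory and generic perfection, transported to the northwest setting. The route that fits this note is a Gr\"obner degeneration with respect to an antidiagonal term order: if the initial ideal of $I_\pi$ is a squarefree monomial ideal then $I_\pi$ is automatically radical, and matching the Krull dimension of that monomial ideal against $\dim\overline{X}_\pi=n^2-\ell(\pi)$ rules out extraneous components. The main obstacle is exactly controlling this initial ideal---showing the Fulton minors form a Gr\"obner basis and that their antidiagonal leading terms generate a squarefree (Stanley--Reisner) ideal of the correct dimension---and this is precisely the computation the remainder of the paper is organized to carry out.
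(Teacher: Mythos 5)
The paper does not prove this statement: it is quoted as background, attributed to Fulton's 1992 paper, so there is no internal proof to compare against. Judged on its own terms, your outline is a faithful roadmap of how the result is actually established in the literature: irreducibility of the orbit closure, the rank matrix as a complete invariant of the $B_-\times B_+$ orbits with closure order given by entrywise comparison of rank matrices, the combinatorial reduction to essential boxes via the rank-increment inequality, and radicality via either classical determinantal-ideal theory or an antidiagonal Gr\"obner degeneration. All of these ingredients are the right ones, and the logic assembling them (irreducible plus radical plus set-theoretically correct implies $I_\pi=I(\overline{X}_\pi)$) is sound.

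But as written this is a plan rather than a proof, and its two load-bearing steps are both deferred. The claim that $O_\tau\subseteq\overline{O_\sigma}$ if and only if $r(\tau)\le r(\sigma)$ entrywise requires the explicit one-parameter degeneration you acknowledge but do not supply, and radicality is outsourced entirely to one of two cited black boxes. There is also a factual slip at the end: the remainder of this paper does not carry out the Knutson--Miller computation (the Gr\"obner basis property of the Fulton generators and the squarefreeness of their antidiagonal initial ideal); it cites that theorem and builds on it to treat unions of schemes defined by northwest rank conditions. So you cannot point to ``the rest of the paper'' to discharge the radicality step; you must either import Knutson--Miller wholesale, in which case you should say so and check you are not using Fulton's theorem circularly in doing it, or reproduce Fulton's original argument for primality.
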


Hereafter we call the determinants corresponding the to essential rank conditions, or the analogous determinants for any ideal generated by northwest rank conditions, the \defi{Fulton generators}.  

One special form of ideal generating set is a Gr\"obner basis.  To define a Gr\"obner basis we set a total ordering on the monomials in a polynomial ring such that $1 \le m$ and  $m <n$ implies $mp<np$ for all monomials $m$, $n$ and $p$.  Let $\init f$ denote the largest monomial that appears in the polynomial $f$.  A \defi{Gr\"obner basis} for the ideal $I$ is a set $\{f_1, \ldots f_r\} \subseteq I$ such that $\init I := \langle \init f : f \in I \rangle = \langle \init f_1, \ldots \init f_r \rangle.$  Notice that a Gr\"obner basis for $I$ is necessarily a generating set for $I$.  

The \defi{antidiagonal} of a matrix is the diagonal series of cells in the matrix running from the most northeast to the most southwest cell.  The \defi{antidiagonal term} (or \defi{antidiagonal}) of a determinant is the product of the entries in the antidiagonal.  For example, the antidiagonal of $\left(\begin{smallmatrix} a&b\\ c&d \end{smallmatrix}\right)$ is the cells occupied by $b$ and $c$, and correspondingly, in the determinant $ad-bc$ the antidiagonal term is $bc$.  Term orders that select antidiagonal terms from a determinant, called \defi{antidiagonal term orders} have proven especially useful in understanding ideals of matrix Schubert varieties.   There are several possible implementations of an antidiagonal term order on an $n \times n$ matrix of variables, any of which would suit the purposes of this paper.  One example is weighting the top right entry highest and decreasing along the top row before starting deceasing again at the right of the next row; monomials are then ordered by their total weight.   

\begin{theorem}[\cite{KnutsonMiller2005}]
The Fulton generators for $I_\pi$ form a Gr\"obner basis under any antidiagonal term order.  
\end{theorem}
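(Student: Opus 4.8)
The plan is to prove the sharper statement that the \emph{antidiagonal ideal}
\[
J_\pi \;:=\; \big\langle\, \init f \;:\; f \text{ a Fulton generator of } I_\pi \,\big\rangle
\]
equals the full initial ideal $\init I_\pi$ in the chosen antidiagonal term order; this equality is by definition the assertion that the Fulton generators form a Gr\"obner basis. Write $S=\C[m_{i,j}]$. Two facts are immediate. Since each Fulton generator lies in $I_\pi$, its antidiagonal term lies in $\init I_\pi$, so $J_\pi\subseteq\init I_\pi$. And the antidiagonal term of a $k\times k$ minor is a product of $k$ \emph{distinct} variables, so $J_\pi$ is a squarefree monomial ideal; hence $J_\pi$ is radical and $V(J_\pi)$ is a union of coordinate subspaces, one for each facet of the associated Stanley--Reisner complex $\Delta_\pi$. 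The strategy is to show that $J_\pi$ and $\init I_\pi$ have the same multidegree and then to use radicality together with the Cohen--Macaulayness of $S/J_\pi$ to upgrade the inclusion $J_\pi\subseteq\init I_\pi$ to equality.

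For the multidegree I would work in the $\Z^{2n}$-grading of $S$ coming from the scaling action of $(\C^\ast)^n\times(\C^\ast)^n$ by rows and columns, and write $\mathcal C(S/-)$ for the resulting multidegree. On the geometric side, Fulton's theorem (the first Theorem above) identifies $S/I_\pi$ with the reduced coordinate ring of the matrix Schubert variety $\overline{X}_\pi=\overline{B_-\pi B_+}$, an irreducible variety of dimension $n^2-\ell(\pi)$; since $\overline{X}_\pi$ represents the corresponding Schubert class, its multidegree is the double Schubert polynomial, $\mathcal C(S/I_\pi)=\mathfrak S_\pi$ \cite{Fulton1992}. Because passage to an initial ideal is a flat degeneration, multidegrees are preserved, so $\mathcal C(S/\init I_\pi)=\mathfrak S_\pi$ as well.

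On the combinatorial side, since $J_\pi$ is a Stanley--Reisner ideal its multidegree is the sum of the monomials attached to the coordinate subspaces cut out by the facets of $\Delta_\pi$. The central step is to identify these facets with the reduced pipe dreams (RC graphs) of $\pi$, so that this sum becomes the pipe-dream generating function, which by the Bergeron--Billey formula \cite{BergeronBilley} is precisely $\mathfrak S_\pi$. Thus $\mathcal C(S/J_\pi)=\mathcal C(S/\init I_\pi)$. To convert this numerical coincidence into an ideal equality I would further prove that $\Delta_\pi$ is shellable, so that $S/J_\pi$ is Cohen--Macaulay and in particular pure of dimension $n^2-\ell(\pi)$. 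Now the point of the squarefree hypothesis becomes decisive: the components of $V(J_\pi)$ are coordinate subspaces, whose multidegrees are \emph{distinct monomials} and hence linearly independent, and each occurs with multiplicity one by radicality. The inclusion $J_\pi\subseteq\init I_\pi$ gives $V(\init I_\pi)\subseteq V(J_\pi)$ with equal top dimension, so the top-dimensional components of $V(\init I_\pi)$ form a subset of those coordinate subspaces; equality of multidegrees, read off the independent monomials, forces this subset to be everything with multiplicity one, whence $V(\init I_\pi)=V(J_\pi)$ set-theoretically. Taking radicals gives $\sqrt{\init I_\pi}=J_\pi$, and since $J_\pi\subseteq\init I_\pi\subseteq\sqrt{\init I_\pi}=J_\pi$ we conclude $\init I_\pi=J_\pi$, as desired.

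The main obstacle is the combinatorial-topological core of the third paragraph: identifying the facets of the antidiagonal complex $\Delta_\pi$ with the pipe dreams of $\pi$, carrying out the facet enumeration that produces $\mathfrak S_\pi$, and---most delicate---proving that $\Delta_\pi$ is shellable. Shellability is exactly what forbids lower-dimensional or embedded components of $V(J_\pi)$ and thereby makes the multidegree comparison conclusive; without it, equal multidegrees would pin down only the top-dimensional behavior. A direct verification through Buchberger's criterion, reducing every $S$-polynomial of two Fulton generators to zero, is possible in principle, but the simultaneous bookkeeping of several distinct essential rank conditions is precisely the difficulty this degeneration argument is designed to avoid.
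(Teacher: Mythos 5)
This theorem is quoted in the paper from \cite{KnutsonMiller2005} as background and is not proved there, so there is no in-paper argument to compare against; what you have written is, in outline, exactly the strategy of the cited Knutson--Miller proof (antidiagonal degeneration, identification of the Stanley--Reisner complex of the initial ideal with the pipe dream complex, the Bergeron--Billey formula to match multidegrees with the double Schubert polynomial, and shellability to rule out lower-dimensional or embedded behavior). Your closing logic --- purity plus equality of multidegrees forces $V(\init I_\pi)=V(J_\pi)$, hence $\init I_\pi\subseteq\sqrt{\init I_\pi}=J_\pi\subseteq\init I_\pi$ --- is sound.

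Two caveats. First, as you acknowledge yourself, this is a roadmap rather than a proof: the identification of the facets of $\Delta_\pi$ with reduced pipe dreams and the shellability of $\Delta_\pi$ are precisely where the work of \cite{KnutsonMiller2005} lives, and nothing in your write-up discharges them. Second, one step is stated inaccurately: in the $\Z^{2n}$-grading the multidegree of a coordinate subspace cut out by $m_{i_1j_1},\dots,m_{i_kj_k}$ is the product $\prod_a (x_{i_a}+y_{j_a})$ of linear forms, not a monomial, so the linear independence of these classes over subspaces of a fixed codimension is not ``distinct monomials are independent'' but a genuine (if elementary) lemma, proved in \cite{KnutsonMiller2005}; without it the multidegree comparison does not immediately force each facet to appear with multiplicity one. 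With that lemma supplied and the combinatorial core carried out, your argument goes through and coincides with the standard one.
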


Typically we will denote the cells of a matrix that form antidiagonals by $A$ or $B$.  In what follows if $A$ is the antidiagonal of a sub-matrix of $M$ we will use the notation $\det(A)$ to denote the determinant of this sub-matrix.  We shall be fairly liberal in exchanging antidiagonal cells and the corresponding antidiagonal terms, thus, for any antidiagonal term order, $A=\init \det(A)$.  

\subsection{Statement of Result}

Let $I_1, \ldots I_r$ be ideals defined by northwest rank conditions.  We will produce a Gr\"obner basis, and hence ideal generating set, for $I_1 \cap \cdots \cap I_r$.   For each list of antidiagonals $A_1, \ldots, A_r$, where $A_i$ is the antidiagonal of a Fulton generator of $I_{i}$, we will produce a Gr\"obner basis element $g_{A_1, \ldots , A_r}$ for $\cap I_i$.  The generators $g_{A_1, \ldots , A_r}$ will be products of determinants, though not simply the product of the $r$ determinants corresponding to the   $A_i$.  For a fixed list of antidiagonals $A_1, \ldots , A_r$, build the generator $g_{A_1, \ldots , A_r}$ by:

\begin{enumerate}
\item{Begin with $g_{A_1, \ldots , A_r}=1$}
\item{Draw a diagram with a dot of color $i$ in each box of $A_i$ and connect the consecutive dots of color $i$ with a line segment of color $i$.  }
\item{Break the diagram into connected components.  Two dots are connected if they are either connected by lines or are connected by lines to dots that occupy the same box.  }
\item{For each connected component, remove the longest series of boxes $B$ such that there is exactly one box in each row and column and the boxes are  all in the same connected component.  If there is a tie use the most northwest of the longest series of boxes.  Note that B need not be any of $A_1, \ldots , A_r$.  Multiply $g_{A_1, \ldots , A_r}$ by $\det(B)$.  Remove this antidiagonal from the diagram of the connected component, break the remaining diagram into components and repeat.  }
\end{enumerate}

\begin{theorem}\label{mainthm}
$\{g_{A_1\ldots A_r}: A_i \text{ is an antidiagonal of a Fulton generator of }I_i \text{, } 1 \le i \le r \}$ form a Gr\"obner basis, and hence a generating set, for $\cap_{i=1}^rI_i$.  
\end{theorem}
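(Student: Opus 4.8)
The plan is to sandwich the candidate initial ideal between two copies of itself. Writing $G$ for the proposed set of generators and $J=\bigcap_{i=1}^r I_i$, one always has the chain
\[
\langle \init g : g\in G\rangle \;\subseteq\; \init J \;\subseteq\; \bigcap_{i=1}^r \init(I_i),
\]
where the first inclusion holds the moment each $g\in J$, and the second is the general fact that the initial term of an element of an intersection lies in every $\init(I_i)$. So it suffices to prove two things: (a) each $g_{A_1,\ldots,A_r}\in J$, and (b) $\bigcap_i \init(I_i)\subseteq \langle \init g : g\in G\rangle$. Together these force equality throughout the chain, which is exactly the Gr\"obner basis condition $\init J=\langle \init g\rangle$, and as a byproduct yield $\init J=\bigcap_i\init(I_i)$.

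Step (b) is the bookkeeping half, and I would dispatch it first. By the Knutson--Miller theorem quoted above, $\init(I_i)$ is the monomial ideal generated by the antidiagonals of the Fulton generators of $I_i$; hence $\bigcap_i\init(I_i)$ is generated by the monomials $\lcm(A_1,\ldots,A_r)$ as each $A_i$ ranges independently over those antidiagonals. Since each $A_i$ is squarefree, $\lcm(A_1,\ldots,A_r)$ is simply the union of the cell sets $A_1\cup\cdots\cup A_r$. On the other side, $\init g_{A_1,\ldots,A_r}=\prod_B \init\det(B)=\prod_B B$, the product running over the antidiagonals $B$ peeled off in step (4); because that peeling partitions the cells of $A_1\cup\cdots\cup A_r$, using each cell exactly once, this product is again the squarefree monomial supported on $A_1\cup\cdots\cup A_r$. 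Thus $\init g_{A_1,\ldots,A_r}=\lcm(A_1,\ldots,A_r)$, so (b) is in fact an equality of ideals and there is nothing left to check in it.

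The real content is the membership (a). Since each $I_i$ is the prime ideal of the irreducible matrix Schubert variety $\overline{X}_{\pi_i}$, it suffices to show that for every $i$ at least one determinantal factor $\det(B)$ of $g_{A_1,\ldots,A_r}$ lies in $I_i$. This rests on a rank--propagation lemma I would isolate first: if an antidiagonal $B$ has more than $r(\pi)_{ab}$ of its cells in the northwest $a\times b$ submatrix, then $\det(B)\in I_\pi$. To prove it, restrict $\det(B)$ to the columns of $B$ whose index is at most $b$. On $\overline{X}_\pi$ the rows $\le a$ contribute rank at most $r(\pi)_{ab}$ to these columns, while the $s$ rows of $B$ below row $a$ contribute at most $s$ more, so the selected columns have rank at most $r(\pi)_{ab}+s$. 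But the antidiagonal shape forces every cell of $B$ below row $a$ to lie in a column of index $\le b$, so the number of selected columns equals $s$ plus the number of cells of $B$ in the northwest $a\times b$ rectangle, which by hypothesis exceeds $r(\pi)_{ab}$. Hence more than $r(\pi)_{ab}+s$ columns are selected; they are linearly dependent on $\overline{X}_\pi$, and $\det(B)$ vanishes there.

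Granting the lemma, (a) reduces to the combinatorial claim that the peeling of step (4) produces, for each $i$, an antidiagonal $B$ with more than $r(\pi_i)_{ab}$ cells in the northwest $a\times b$ rectangle of some essential box $(a,b)$ of $\pi_i$. This is where I expect the main difficulty, and where the precise rules of step (4) are used. The cells of the chosen $A_i$ all sit in one connected component and already supply $r(\pi_i)_{ab}+1$ cells inside that rectangle; the point is to follow the successive removals of the longest (and, in case of ties, most northwest) antidiagonal and argue that some removed antidiagonal accumulates enough of these cells before they are exhausted. The northwest tie--break is precisely what should keep the removed antidiagonal hugging the northwest region, and I would prove the claim by induction on the number of peeling steps, maintaining as an invariant that the component currently containing the surviving cells of $A_i$ still meets the rectangle in more than $r(\pi_i)_{ab}$ cells until the step that harvests them. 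Verifying that this invariant survives each removal, against all configurations of overlapping antidiagonals, is the crux of the whole argument.
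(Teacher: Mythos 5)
Your skeleton is the same as the paper's --- sandwich the candidate initial ideal via $\langle \init g : g \in G\rangle \subseteq \init J \subseteq \bigcap_i \init(I_i)$ and note that $\init g_{A_1,\ldots,A_r}$ is the squarefree monomial supported on $A_1\cup\cdots\cup A_r$, which is exactly $\lcm(A_1,\ldots,A_r)$ --- so your step (b) is fine, and in fact slightly more economical than the paper, which instead quotes a theorem that $\init\bigl(\bigcap I_i\bigr)=\bigcap\init(I_i)$ for northwest-rank-condition ideals together with the lemma that homogeneous ideals $J\subseteq K$ with $\init J=\init K$ coincide. (A small quibble: reducing (a) to ``some factor of $g$ lies in $I_i$'' needs only that $I_i$ is an ideal, not that it is prime.)

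The problem is that step (a) is not proved, and it is the entire content of the theorem. You reduce it to the claim that the peeling produces, for each $i$, a factor violating a rank condition of $I_i$, propose an induction on peeling steps maintaining an invariant, and then state explicitly that verifying the invariant ``against all configurations of overlapping antidiagonals'' is the crux of the whole argument. That crux is exactly what is missing, and your formulation makes it harder than necessary by trying to track rank counts through every removal. The paper closes the gap with a local argument: first a gluing lemma (if $\det(A)\in I_A$ and $\det(B)\in I_B$ have antidiagonals with $A\cap B\ne\varnothing$ and $A\cup B=X$, then $\det(X)\in I_A\cap I_B$; its proof is the same rank-propagation computation you sketch), and then the observation that the \emph{first} peeled antidiagonal $S$ meeting $A_i$ is removed while $A_i$ is still intact and hence still a candidate, so the longest/most-northwest selection rule forces $|S|\ge|A_i|$ with $S$ weakly northwest of $A_i$; therefore $S$ contains a subset $B$ with $|B|=|A_i|$ weakly northwest of $A_i$, $\det(B)\in I_i$ because northwest rank conditions propagate to the northwest, and gluing gives $\det(S)\in I_i$, whence $g_{A_1,\ldots,A_r}\in I_i$. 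Until you supply either that argument or a completed version of your invariant, the proof is incomplete at its central step.
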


\subsection{Acknowledgements} This work constitutes a portion of my PhD thesis completed at Cornell University under the direction of Allen Knutson.   I wish to thank Allen for his help, advice and encouragement in completing this project.  Thanks also go to Jenna Rajchgot for helpful discussions in the early stages of this work.   I'd also like to thank the authors of computer algebra system Macaulay2, \cite{M2} which powered the computational experiments nessecary to do this work.  I'm especially grateful to Mike Stillman who patiently answered many of my Macaulay2 questions over the course of this work.  Kevin Purbhoo gave very helpful comments on drafts of this manuscript for which I cannot thank him enough.  

\section{Examples}

We delay the proof of Theorem \ref{mainthm} to Section \ref{S:proof} and first give some examples of the generators produced for given sets of antidiagonals.  These examples are given by pictures of the antidiagonals on the left and corresponding determinantal equations on the right.  Note that we only give particular generators, rather than entire generating sets, which might be quite large.  We then give entire ideal generating sets for two smaller intersections.  

If $r=1$ then for each Fulton generator with antidiagonal $A$ the algorithm produces the generator $g_A=\det(A)$.  Therefore, if we intersect only one ideal the algorithm returns the original set of Fulton generators.  The generator for the antidiagonal shown is exactly the determinant of the one antidiagonal pictured:
\begin{center}
\begin{tikzpicture}[x=.75cm, y=.75cm]
\draw (0,0)--(6,0);
\draw (0,1)--(6,1);
\draw (0,2)--(6,2);
\draw (0,3)--(6,3);
\draw (0,4)--(6,4);
\draw (0,5)--(6,5);
\draw (0,6)--(6,6);
\draw (0,0)--(0,6);
\draw (1,0)--(1,6);
\draw (2,0)--(2,6);
\draw (3,0)--(3,6);
\draw (4,0)--(4,6);
\draw (5,0)--(5,6);
\draw (6,0)--(6,6);
\draw [color=red, ultra thick] (.5,2.4)--(1.5,3.4)--(3.5,5.4);
\fill [color=red] (.5,2.4) circle(5pt);
\fill [color=red] (1.5,3.4) circle(5pt);
\fill [color=red] (3.5,5.4) circle(5pt);
\node [right] at (7,3) {$\left|\begin{array}{ccc}m_{1,1} & m_{1,2} & m_{1,4} \\m_{3,1} & m_{3,2} & m_{3,4} \\m_{4,1} & m_{4,2} & m_{4,4}\end{array}\right|.$};
\end{tikzpicture}
\end{center}

The generator for two disjoint antidiagonals is the product of the determinants corresponding to the two disjoint antidiagonals:
\begin{center}
\begin{tikzpicture}[x=.75cm, y=.75cm]
\draw (0,0)--(6,0);
\draw (0,1)--(6,1);
\draw (0,2)--(6,2);
\draw (0,3)--(6,3);
\draw (0,4)--(6,4);
\draw (0,5)--(6,5);
\draw (0,6)--(6,6);
\draw (0,0)--(0,6);
\draw (1,0)--(1,6);
\draw (2,0)--(2,6);
\draw (3,0)--(3,6);
\draw (4,0)--(4,6);
\draw (5,0)--(5,6);
\draw (6,0)--(6,6);
\draw [color=blue, ultra thick] (.5,4.5)--(1.5,5.5);
\fill [color=blue] (.5,4.5) circle(5pt);
\fill [color=blue] (1.5,5.5) circle(5pt);
\draw [color=red, ultra thick] (.5,2.4)--(1.5,3.4)--(3.5,5.4);
\fill [color=red] (.5,2.4) circle(5pt);
\fill [color=red] (1.5,3.4) circle(5pt);
\fill [color=red] (3.5,5.4) circle(5pt);
\node [right] at (7,3) {$\left|\begin{array}{cc}m_{1,1} & m_{1,2} \\m_{2,1} & m_{3,2}\end{array}\right|\left|\begin{array}{ccc}m_{1,1} & m_{1,2} & m_{1,4} \\m_{3,1} & m_{3,2} & m_{3,4} \\m_{4,1} & m_{4,2} & m_{4,4}\end{array}\right|.$};
\end{tikzpicture}
\end{center}
In general, if $A_1, \ldots A_r$ are disjoint antidiagonals then the then the algorithm looks at each $A_i$ separately as they are part of separate components and the result is that $g_{A_1, \ldots A_r}=\det(A_1)\cdots\det(A_r)$.

If $A_1, \ldots A_r$ overlap to form one antidiagonal $X$ then the last step of the algorithm will occur only once and will produce $g_{A_1, \ldots A_r}=\det(X)$.  For example, 
\begin{center}
\begin{tikzpicture}[x=.75cm, y=.75cm]
\draw (0,0)--(6,0);
\draw (0,1)--(6,1);
\draw (0,2)--(6,2);
\draw (0,3)--(6,3);
\draw (0,4)--(6,4);
\draw (0,5)--(6,5);
\draw (0,6)--(6,6);
\draw (0,0)--(0,6);
\draw (1,0)--(1,6);
\draw (2,0)--(2,6);
\draw (3,0)--(3,6);
\draw (4,0)--(4,6);
\draw (5,0)--(5,6);
\draw (6,0)--(6,6);
\draw [color=green, ultra thick] (1.5,3.5)--(2.5,4.5)--(3.5,5.6);
\fill [color=green] (1.5,3.5) circle(5pt);
\fill [color=green] (2.5,4.5) circle(5pt);
\fill [color=green] (3.5,5.6) circle(5pt);
\draw [color=red, ultra thick] (.5,2.4)--(1.5,3.4)--(3.5,5.4);
\fill [color=red] (.5,2.4) circle(5pt);
\fill [color=red] (1.5,3.4) circle(5pt);
\fill [color=red] (3.5,5.4) circle(5pt);
\node [right] at (7,3) {$\left|\begin{array}{cccc}m_{1,1} & m_{1,2} & m_{1,3} & m_{1,4} \\ m_{2,1} & m_{2,2} & m_{2,3} & m_{2,4} \\m_{3,1} & m_{3,2} & m_{3,3} & m_{3,4} \\ m_{4,1} & m_{4,2} & m_{4,3} & m_{4,4} \\ \end{array}\right|.$};
\end{tikzpicture}
\end{center}

In this example, there are two longest possible antidiagonals, the three cells occupied by the green dots and the three cells occupied by the red dots.  The ones occupied by the green dots are more northwest, hence the generator for the three antidiagonals shown below is 
\begin{center}
\begin{tikzpicture}[x=.75cm, y=.75cm]
\draw (0,0)--(6,0);
\draw (0,1)--(6,1);
\draw (0,2)--(6,2);
\draw (0,3)--(6,3);
\draw (0,4)--(6,4);
\draw (0,5)--(6,5);
\draw (0,6)--(6,6);
\draw (0,0)--(0,6);
\draw (1,0)--(1,6);
\draw (2,0)--(2,6);
\draw (3,0)--(3,6);
\draw (4,0)--(4,6);
\draw (5,0)--(5,6);
\draw (6,0)--(6,6);
\draw [color=blue, ultra thick] (.5,4.5)--(1.5,5.5);
\fill [color=blue] (.5,4.5) circle(5pt);
\fill [color=blue] (1.5,5.5) circle(5pt);
\draw [color=green, ultra thick] (1.5,3.5)--(2.5,4.5)--(3.5,5.6);
\fill [color=green] (1.5,3.5) circle(5pt);
\fill [color=green] (2.5,4.5) circle(5pt);
\fill [color=green] (3.5,5.6) circle(5pt);
\draw [color=red, ultra thick] (1.5,1.5)--(2.5,2.5)--(3.5,5.4);
\fill [color=red] (1.5,1.5) circle(5pt);
\fill [color=red] (2.5,2.5) circle(5pt);
\fill [color=red] (3.5,5.4) circle(5pt);
\node [right] at (7,3) {$\left| \begin{array}{cc}m_{1,1} & m_{1,2} \\m_{2,1} & m_{2,2}\\ \end{array}\right| \left| \begin{array}{ccc}m_{1,2} & m_{1,3} & m_{1,3} \\m_{2,2} & m_{2,3} & m_{2,3} \\ m_{3,2} & m_{3,3} & m_{3,4} \\ \end{array}\right| \left| \begin{array}{cc}m_{4,2} & m_{4,3} \\m_{5,2} & m_{2,2}\end{array}\right| .$};
\end{tikzpicture}
\end{center}

In the picture below, the longest possible anti diagonal uses all of the cells in the green anti diagonal but only some of the cells in the red antidiagonal, however, there is only one possible longest antidiagonal.   Thus the generator is 
\begin{center}
\begin{tikzpicture}[x=.75cm, y=.75cm]
\draw (0,0)--(6,0);
\draw (0,1)--(6,1);
\draw (0,2)--(6,2);
\draw (0,3)--(6,3);
\draw (0,4)--(6,4);
\draw (0,5)--(6,5);
\draw (0,6)--(6,6);
\draw (0,0)--(0,6);
\draw (1,0)--(1,6);
\draw (2,0)--(2,6);
\draw (3,0)--(3,6);
\draw (4,0)--(4,6);
\draw (5,0)--(5,6);
\draw (6,0)--(6,6);
\draw [color=blue, ultra thick] (.5,4.5)--(1.5,5.5);
\fill [color=blue] (.5,4.5) circle(5pt);
\fill [color=blue] (1.5,5.5) circle(5pt);
\draw [color=green, ultra thick] (.5,2.5)--(1.5,3.5)--(3.5,4.6);
\fill [color=green] (.5,2.5) circle(5pt);
\fill [color=green] (1.5,3.5) circle(5pt);
\fill [color=green] (3.5,4.6) circle(5pt);
\draw [color=red, ultra thick] (.5,1.4)--(3.5,4.4)--(4.5,5.4);
\fill [color=red] (.5,1.4) circle(5pt);
\fill [color=red] (3.5,4.4) circle(5pt);
\fill [color=red] (4.5,5.4) circle(5pt);
\node [right] at (7,3){$\left|\begin{array}{cc}m_{1,1} & m_{1,2} \\m_{2,1} & m_{2,2}\\ \end{array}\right| \left|\begin{array}{cccc}m_{1,1} & m_{1,2} & m_{1,4} & m_{1,5} \\m_{2,1} & m_{2,2} & m_{2,4} & m_{2,5} \\m_{3,1} & m_{3,2} & m_{3,4} & m_{3,5} \\m_{4,1} & m_{4,2} & m_{4,4} & m_{4,5}\end{array}\right|\left|\begin{array}{c}m_{5,1}\end{array}\right|.$};
\end{tikzpicture}
\end{center}

We now give two examples where the complete ideals are comparatively small.  Firstly, we calculate $I(\overline{X}_{231} \cup \overline{X}_{312})=I(\overline{X}_{231})\cap I(\overline{X}_{312})$.  $I(\overline{X}_{231})=\langle m_{1,1}, m_{2,1} \rangle$ and $I(\overline{X}_{312})=\langle m_{1,1}, m_{1,2} \rangle$.  The antidiagonals and corresponding generators are shown below with antidiagonals from generators of $I(\overline{X}_{231})$ shown in red and antidiagonals of generators of  $I(\overline{X}_{312})$ shown in blue.  Note that the antidiagonals are only one cell each in this case.  
\begin{center}
\begin{tikzpicture}[x=.75cm, y=.75cm]
\draw (0,0)--(0,3)--(3,3)--(3,0)--(0,0);
\draw (0,1)--(3,1);
\draw (0,2)--(3,2);
\draw (1,0)--(1,3);
\draw (2,0)--(2,3);
\fill [color=red] (.4,2.4) circle(5pt);
\fill [color=blue] (.6,2.6) circle(5pt);
\node [below] at (1.5,0) {$m_{1,1}$};
\draw (4,0)--(4,3)--(7,3)--(7,0)--(4,0);
\draw (4,1)--(7,1);
\draw (4,2)--(7,2);
\draw (5,0)--(5,3);
\draw (6,0)--(6,3);
\fill [color=red] (4.5,2.5) circle(5pt);
\fill [color=blue] (5.5,2.5) circle(5pt);
\node [below] at (5.5,0) {$m_{1,1}m_{1,2}$};
\draw (8,0)--(8,3)--(11,3)--(11,0)--(8,0);
\draw (8,1)--(11,1);
\draw (8,2)--(11,2);
\draw (9,0)--(9,3);
\draw (10,0)--(10,3);
\fill [color=red] (8.5,1.5) circle(5pt);
\fill [color=blue] (8.5,2.5) circle(5pt);
\node [below] at (9.5,0) {$m_{1,1}m_{2,1}$};
\draw (12,0)--(12,3)--(15,3)--(15,0)--(12,0);
\draw (12,1)--(15,1);
\draw (12,2)--(15,2);
\draw (13,0)--(13,3);
\draw (14,0)--(14,3);
\fill [color=red] (12.5,1.5) circle(5pt);
\fill [color=blue] (13.5,2.5) circle(5pt);
\node [below] at (13.5,0) {$m_{1,2}m_{2,1}$};
\end{tikzpicture}
\end{center}
Theorem \ref{mainthm} results in 
\[
I(\overline{X}_{231} \cup \overline{X}_{312})=I(\overline{X}_{231})\cap I(\overline{X}_{312})=\langle m_{1,1}, m_{1,1}m_{1,2}, m_{1,1}m_{2,1}, m_{1,2}m_{2,1} \rangle.
\] 

As a slightly larger example, consider $I(\overline{X}_{1423} \cup \overline{X}_{1342})=I(\overline{X}_{1423}) \cap I(\overline{X}_{1342})$.  These generators are given below in the order that the antidiagonals are displayed reading left to right and top to bottom.  The antidiagonals for $I(\overline{X}_{1423}$ are shown in red while the antidigaonals $I(\overline{X}_{1342})$ are shown in blue.  for Note that the full $4 \times 4$ grid is not displayed, but only the northwest $3 \times 3$ portion where antidiagonals for these two ideals may lie.  
\begin{center}
\begin{tikzpicture}[x=.75cm, y=.75cm]
\draw (0,0)--(3,0);
\draw (4,0)--(7,0);
\draw (8,0)--(11,0);
\draw (0,1)--(3,1);
\draw (4,1)--(7,1);
\draw (8,1)--(11,1);
\draw (0,2)--(3,2);
\draw (4,2)--(7,2);
\draw (8,2)--(11,2);
\draw (0,3)--(3,3);
\draw (4,3)--(7,3);
\draw (8,3)--(11,3);
\draw (0,4)--(3,4);
\draw (4,4)--(7,4);
\draw (8,4)--(11,4);
\draw (0,5)--(3,5);
\draw (4,5)--(7,5);
\draw (8,5)--(11,5);
\draw (0,6)--(3,6);
\draw (4,6)--(7,6);
\draw (8,6)--(11,6);
\draw (0,7)--(3,7);
\draw (4,7)--(7,7);
\draw (8,7)--(11,7);
\draw (0,8)--(3,8);
\draw (4,8)--(7,8);
\draw (8,8)--(11,8);
\draw (0,9)--(3,9);
\draw (4,9)--(7,9);
\draw (8,9)--(11,9);
\draw (0,10)--(3,10);
\draw (4,10)--(7,10);
\draw (8,10)--(11,10);
\draw (0,11)--(3,11);
\draw (4,11)--(7,11);
\draw (8,11)--(11,11);
\draw (0,0)--(0,3);
\draw (0,4)--(0,7);
\draw (0,8)--(0,11);
\draw (1,0)--(1,3);
\draw (1,4)--(1,7);
\draw (1,8)--(1,11);
\draw (2,0)--(2,3);
\draw (2,4)--(2,7);
\draw (2,8)--(2,11);
\draw (3,0)--(3,3);
\draw (3,4)--(3,7);
\draw (3,8)--(3,11);
\draw (4,0)--(4,3);
\draw (4,4)--(4,7);
\draw (4,8)--(4,11);
\draw (5,0)--(5,3);
\draw (5,4)--(5,7);
\draw (5,8)--(5,11);
\draw (6,0)--(6,3);
\draw (6,4)--(6,7);
\draw (6,8)--(6,11);
\draw (7,0)--(7,3);
\draw (7,4)--(7,7);
\draw (7,8)--(7,11);
\draw (8,0)--(8,3);
\draw (8,4)--(8,7);
\draw (8,8)--(8,11);
\draw (9,0)--(9,3);
\draw (9,4)--(9,7);
\draw (9,8)--(9,11);
\draw (10,0)--(10,3);
\draw (10,4)--(10,7);
\draw (10,8)--(10,11);
\draw (11,0)--(11,3);
\draw (11,4)--(11,7);
\draw (11,8)--(11,11);
\fill [color=blue] (0.5,1.5) circle(5pt);
\fill [color=blue] (1.5,2.5) circle(5pt);
\draw [color=blue, ultra thick] (0.5,1.5)--(1.5,2.5);
\fill [color=blue] (0.4,5.4) circle(5pt);
\fill [color=blue] (1.5,6.5) circle(5pt);
\draw [color=blue, ultra thick] (0.4,5.4)--(1.5,6.5);
\fill [color=blue] (0.4,9.4) circle(5pt);
\fill [color=blue] (1.4,10.4) circle(5pt);
\draw [color=blue, ultra thick] (0.4,9.4)--(1.4,10.4);
\fill [color=blue] (4.5,.5) circle(5pt);
\fill [color=blue] (5.5,2.5) circle(5pt);
\draw [color=blue, ultra thick] (4.5,.5)--(5.5,2.5);
\fill [color=blue] (4.5,4.5) circle(5pt);
\fill [color=blue] (5.5,6.5) circle(5pt);
\draw [color=blue, ultra thick] (4.5,4.5)--(5.5,6.5);
\fill [color=blue] (4.5,8.5) circle(5pt);
\fill [color=blue] (5.4,10.4) circle(5pt);
\draw [color=blue, ultra thick] (4.5,8.5)--(5.4,10.4);
\fill [color=blue] (8.5,0.5) circle(5pt);
\fill [color=blue] (9.4,1.4) circle(5pt);
\draw [color=blue, ultra thick] (8.5,0.5)--(9.4,1.4);
\fill [color=blue] (8.5,4.5) circle(5pt);
\fill [color=blue] (9.5,5.5) circle(5pt);
\draw [color=blue, ultra thick] (8.5,4.5)--(9.5,5.5);
\fill [color=blue] (8.5,8.5) circle(5pt);
\fill [color=blue] (9.5,9.5) circle(5pt);
\draw [color=blue, ultra thick] (8.5,8.5)--(9.5,9.5);
\fill [color=red] (1.5,1.5) circle(5pt);
\fill [color=red] (2.5,2.5) circle(5pt);
\draw [color=red, ultra thick] (1.5,1.5)--(2.5,2.5);
\fill [color=red] (5.5,1.5) circle(5pt);
\fill [color=red] (6.5,2.5) circle(5pt);
\draw [color=red, ultra thick] (5.5,1.5)--(6.5,2.5);
\fill [color=red] (9.6,1.6) circle(5pt);
\fill [color=red] (10.5,2.5) circle(5pt);
\draw [color=red, ultra thick] (9.6,1.6)--(10.5,2.5);
\fill [color=red] (0.6,5.6) circle(5pt);
\fill [color=red] (2.5,6.5) circle(5pt);
\draw [color=red, ultra thick] (0.6,5.6)--(2.5,6.5);
\fill [color=red] (4.5,5.5) circle(5pt);
\fill [color=red] (6.5,6.5) circle(5pt);
\draw [color=red, ultra thick] (4.5,5.5)--(6.5,6.5);
\fill [color=red] (8.5,5.5) circle(5pt);
\fill [color=red] (10.5,6.5) circle(5pt);
\draw [color=red, ultra thick] (8.5,5.5)--(10.5,6.5);
\fill [color=red] (0.6,9.6) circle(5pt);
\fill [color=red] (1.6,10.6) circle(5pt);
\draw [color=red, ultra thick] (0.6,9.6)--(1.6,10.6);
\fill [color=red] (4.5,9.5) circle(5pt);
\fill [color=red] (5.5,10.5) circle(5pt);
\draw [color=red, ultra thick] (4.5,9.5)--(5.5,10.5);
\fill [color=red] (8.5,9.5) circle(5pt);
\fill [color=red] (9.5,10.5) circle(5pt);
\draw [color=red, ultra thick] (8.5,9.5)--(9.5,10.5);
\end{tikzpicture}
\end{center}
Here Theorem \ref{mainthm} produces
\begin{align*}
\phantom{+}&\bigg\langle  \left|\begin{array}{cc}m_{1,1} & m_{1,2} \\m_{2,1} & m_{2,2}\end{array}\right|, \left|\begin{array}{cc}m_{1,1} & m_{1,2} \\m_{2,1} & m_{2,2}\end{array}\right| \left|\begin{array}{c}m_{1,1}\end{array}\right|, \left|\begin{array}{cc}m_{3,1} & m_{1,2} \\m_{2,1} & m_{2,2}\end{array}\right|\left|\begin{array}{cc}m_{2,1} & m_{2,2} \\m_{3,1} & m_{2,2}\end{array}\right|\bigg\rangle\\
+&\bigg\langle \left|\begin{array}{cc}m_{1,1} & m_{1,2} \\m_{3,1} & m_{2,2}\end{array}\right|\left|\begin{array}{c}m_{1,1}\end{array}\right|,  \left|\begin{array}{cc}m_{1,1} & m_{1,3} \\m_{2,1} & m_{2,3}\end{array}\right|\left|\begin{array}{cc}m_{1,1} & m_{1,3} \\m_{3,1} & m_{3,2}\end{array}\right|, \left|\begin{array}{cc}m_{1,1} & m_{1,3} \\m_{3,1} & m_{2,3}\end{array}\right|\left|\begin{array}{cc}m_{2,1} & m_{2,2} \\m_{3,1} & m_{2,3}\end{array}\right|\bigg\rangle\\
+&\bigg\langle \left|\begin{array}{cc}m_{1,1} & m_{1,2} \\m_{2,1} & m_{3,2}\end{array}\right|\left|\begin{array}{cc}m_{1,2} & m_{1,3} \\m_{2,2} & m_{2,2}\end{array}\right|,  \left|\begin{array}{cc}m_{1,1} & m_{1,2} \\m_{3,1} & m_{2,3}\end{array}\right|\left|\begin{array}{cc}m_{1,2} & m_{1,3} \\m_{2,2} & m_{2,2}\end{array}\right|, \left|\begin{array}{ccc}m_{1,1} & m_{1,2} & m_{1,3} \\m_{2,1} & m_{2,2} & m_{2,3} \\m_{3,1} & m_{3,2} & m_{3,4}\end{array}\right|\bigg\rangle
\end{align*}

\section{Proof of Theorem \ref{mainthm}}\label{S:proof}

We now prove the main result of this paper, Theorem \ref{mainthm}, which states that the $g_{A_1, \ldots ,A_r}$ generate $I_1 \cap \cdots \cap I_r$.  

We begin with a few fairly general statements:
\begin{theorem}[\cite{KnutsonFrob}]\label{thm:initI}
If $\{I_i: i \in S\}$ are ideals generated by northwest rank conditions then $\init (\cap_{i \in S} I_i) = \cap_{i \in S} (\init I_i).$
\end{theorem}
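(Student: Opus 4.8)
The plan is to reduce the theorem to a Hilbert-series computation and then isolate a single statement about sums of two ideals as the real content. Write $R=k[m_{ij}]$ for the coordinate ring of matrix space; every ideal generated by northwest rank conditions is homogeneous (its generators are minors, hence homogeneous of degree equal to their size), and the antidiagonal term order is induced by a weight, so each Gröbner degeneration is flat and preserves the Hilbert series $\operatorname{Hilb}$. First dispose of the easy inclusion: if $f\in\cap_{i\in S}I_i$ then $\init f\in\init I_i$ for every $i$, whence $\init f\in\cap_i\init I_i$; as such monomials generate $\init(\cap_i I_i)$, we get $\init(\cap_i I_i)\subseteq\cap_i\init I_i$. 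Now if $A\subseteq B$ are homogeneous ideals and $\operatorname{Hilb}(R/A)=\operatorname{Hilb}(R/B)$, then $R/B$ is a quotient of $R/A$ of equal dimension in each degree, forcing $A=B$. Applying this with $A=\init(\cap_i I_i)$ and $B=\cap_i\init I_i$, and using flatness in the form $\operatorname{Hilb}(R/\cap_i I_i)=\operatorname{Hilb}(R/\init(\cap_i I_i))$, the whole theorem reduces to the numerical identity $\operatorname{Hilb}(R/\cap_i I_i)=\operatorname{Hilb}(R/\cap_i\init I_i)$.

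Next I would run a Mayer--Vietoris induction on $r=|S|$. For any two homogeneous ideals the sequence $0\to R/(I\cap J)\to R/I\oplus R/J\to R/(I+J)\to 0$ is exact, giving $\operatorname{Hilb}(R/(I\cap J))=\operatorname{Hilb}(R/I)+\operatorname{Hilb}(R/J)-\operatorname{Hilb}(R/(I+J))$. Writing $J=\cap_{i<r}I_i$ and applying this with $I=I_r$ both to the original ideals and to the $\init I_i$, the inductive hypothesis $\operatorname{Hilb}(R/J)=\operatorname{Hilb}(R/\init J)$ together with flatness $\operatorname{Hilb}(R/I_r)=\operatorname{Hilb}(R/\init I_r)$ reduces the claim to $\operatorname{Hilb}(R/(J+I_r))=\operatorname{Hilb}(R/(\init J+\init I_r))$. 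On the monomial side, by \cite{KnutsonMiller2005} (and its extension to all northwest rank schemes) each $\init I_i$ is a squarefree, i.e.\ Stanley--Reisner, ideal, so sums and intersections correspond to intersections and unions of simplicial complexes; because union distributes over intersection, $\init J+\init I_r=\cap_{i<r}(\init I_i+\init I_r)$ automatically.

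The induction therefore closes provided one knows the single statement $(\ast)$: for northwest-rank ideals $I_a,I_b$, the sum $I_a+I_b$ is again the radical northwest-rank ideal cutting out $V(I_a)\cap V(I_b)$, and $\init(I_a+I_b)=\init I_a+\init I_b$. Granting $(\ast)$, radicality yields $J+I_r=\cap_{i<r}(I_i+I_r)$ on the variety side, each $I_i+I_r$ is a northwest-rank ideal, and the inductive hypothesis applied to the $r-1$ ideals $\{I_i+I_r\}_{i<r}$ gives $\operatorname{Hilb}(R/\cap_{i<r}(I_i+I_r))=\operatorname{Hilb}(R/\cap_{i<r}\init(I_i+I_r))$; combining this with the monomial identity of the previous paragraph and $(\ast)$ finishes the step. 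Thus everything collapses to $(\ast)$.

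I expect $(\ast)$ to be the only non-formal step, and hence the main obstacle. Set-theoretically $V(I_a)\cap V(I_b)$ is plainly defined by northwest rank conditions (the pointwise minimum of the two rank bounds), so what must be proved is that $I_a+I_b$ is \emph{radical} and that the union of the two sets of Fulton generators is a Gröbner basis. Both are geometric facts about matrix space in characteristic $p$: it carries a Frobenius splitting that compatibly splits every scheme defined by northwest rank conditions, compatibly split subschemes are reduced and their intersections remain compatibly split (giving radicality of $I_a+I_b$), and choosing the splitting compatible with the antidiagonal weight degeneration propagates the Gröbner-basis property, yielding $\init(I_a+I_b)=\init I_a+\init I_b$. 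This splitting technology is exactly the content of \cite{KnutsonFrob}. Since the rank ideals and their degenerations are all defined over $\mathbb{Z}$, the characteristic-zero case follows from the positive-characteristic one by semicontinuity of Hilbert functions over $\operatorname{Spec}\mathbb{Z}$. Once $(\ast)$ is secured, the reduction above is pure bookkeeping.
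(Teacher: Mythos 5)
A preliminary remark: the paper does not prove Theorem \ref{thm:initI} at all --- it is imported verbatim from \cite{KnutsonFrob} and used as a black box. So there is no in-paper argument to compare yours against, and what follows is an assessment of your proposal on its own terms.

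Your overall architecture is sound and is essentially the right way to organize such a statement: the containment $\init(\cap_i I_i)\subseteq\cap_i\init I_i$ is formal, flatness of Gr\"obner degeneration turns the reverse containment into a Hilbert-series identity, and the Mayer--Vietoris sequence converts intersections into sums so that one can induct. (Incidentally, the distributivity $\init J+\init I_r=\cap_{i<r}(\init I_i+\init I_r)$ holds for arbitrary monomial ideals, since membership of a monomial in a sum or intersection of monomial ideals is tested monomial-by-monomial; you do not need squarefreeness or Stanley--Reisner theory there.) The genuine gap is in the inductive step, at the sentence ``radicality yields $J+I_r=\cap_{i<r}(I_i+I_r)$ on the variety side.'' Equality on the variety side is indeed automatic, but your Hilbert-series bookkeeping requires the equality of \emph{ideals} $(\cap_{i<r}I_i)+I_r=\cap_{i<r}(I_i+I_r)$, and this does not follow from statement $(\ast)$: radicality of each two-term sum $I_i+I_r$ makes the right-hand side radical, but the left-hand side is a sum of two radical ideals, which need not be radical, and the modular law fails for general ideals. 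To close the induction you must strengthen $(\ast)$ to the assertion that \emph{every} ideal in the lattice generated by the $I_i$ under sums and intersections is compatibly split, hence radical. The splitting technology of \cite{KnutsonFrob} does deliver exactly this, but that observation also exposes the real status of your argument: every non-formal input --- radicality and squarefree degeneration of general northwest-rank ideals, statement $(\ast)$, and its lattice-level strengthening --- is again the content of the cited paper. Your proposal is therefore best read as a correct and useful roadmap for how the Frobenius-splitting machinery yields the theorem, not as an independent proof; the purely formal reductions (Hilbert series, Mayer--Vietoris, and the passage from characteristic $p$ to characteristic $0$ by semicontinuity over $\operatorname{Spec}\mathbb{Z}$) are fine as written.
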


\begin{lemma}[\cite{KnutsonMiller2005}]\label{lem:initcont}
If $J \subseteq K$ are homogeneous ideals in a polynomial ring such that $\init J = \init K$ then $J=K$.  
\end{lemma}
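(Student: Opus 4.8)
The plan is to exploit homogeneity to reduce the statement to a comparison of finite-dimensional graded pieces, and then to count dimensions through the initial ideal. Write $R$ for the polynomial ring, graded in the standard way so that each homogeneous component $R_d$ is finite-dimensional over the ground field $k$. Because $J$ and $K$ are homogeneous, the inclusion $J \subseteq K$ is equivalent to the inclusions $J_d \subseteq K_d$ of vector spaces for every degree $d$, and $J=K$ is equivalent to $J_d = K_d$ for every $d$. Since each $J_d \subseteq K_d$ is an inclusion of finite-dimensional spaces, it suffices to prove the equality of dimensions $\dim_k J_d = \dim_k K_d$ for all $d$: a subspace of the same finite dimension as the ambient space is the whole space.

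The heart of the argument is the standard fact that passing to the initial ideal preserves graded dimensions, namely that for any homogeneous ideal $I$ and every $d$,
\[
\dim_k I_d = \dim_k (\init I)_d.
\]
I would establish this as follows. First observe that $\init I$ is a monomial ideal, hence homogeneous, and that for homogeneous $f \in I_d$ the monomial $\init f$ again has degree $d$; conversely, if $m$ is a degree-$d$ monomial of $\init I$ then $m = \init(g)$ for some homogeneous $g \in I_d$, since $m$ is a monomial multiple of the leading term of some homogeneous generator and $\init$ is multiplicative. Thus the degree-$d$ monomials of $\init I$ are precisely the leading monomials of the nonzero elements of $I_d$, so $\dim_k(\init I)_d$ equals the number of such leading monomials. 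Choosing one element $f_m \in I_d$ with $\init f_m = m$ for each such monomial $m$ produces a family with distinct leading terms, which is therefore linearly independent; and the usual Gr\"obner reduction---repeatedly subtracting scalar multiples of the $f_m$ to cancel leading terms, a process that terminates because only finitely many monomials have degree $d$ and the term order well-orders them---shows this family also spans $I_d$. Hence $\dim_k I_d$ equals the number of degree-$d$ leading monomials, matching $\dim_k(\init I)_d$.

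With this identity in hand the conclusion is immediate. Using the hypothesis $\init J = \init K$,
\[
\dim_k J_d = \dim_k (\init J)_d = \dim_k (\init K)_d = \dim_k K_d
\]
for every $d$, so the inclusion $J_d \subseteq K_d$ is an equality of vector spaces in each degree, and therefore $J = K$.

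I do not expect a genuine obstacle here; the only point requiring care is the dimension-counting identity $\dim_k I_d = \dim_k(\init I)_d$, and even that is just the ``standard monomials form a basis of $R/I$'' theorem read off one graded piece at a time. The homogeneity hypothesis is what makes the argument run, since it is precisely what lets the leading-term bookkeeping be carried out degree by degree among finite-dimensional spaces, so I would take care to invoke it explicitly rather than treat the grading as cosmetic.
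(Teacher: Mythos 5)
The paper offers no proof of this lemma --- it is quoted directly from the cited Knutson--Miller reference --- so there is nothing internal to compare against; your argument is the standard one and is correct. The dimension-count $\dim_k I_d = \dim_k(\init I)_d$ via distinct leading terms plus Gr\"obner reduction within each finite-dimensional graded piece is exactly the textbook route (``standard monomials form a basis''), and combined with $J_d \subseteq K_d$ it yields $J=K$ as you say; the only cosmetic looseness is the phrase ``leading term of some homogeneous generator,'' where you should really say ``leading term of some homogeneous element of $I$,'' which homogeneity of $I$ licenses since the top term of any $f\in I$ is already the top term of one of its homogeneous components, each of which lies in $I$.
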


\begin{lemma}\label{lem:antdiagsGlue}
Let $I_A$ and $I_B$ be ideals that define schemes of northwest rank conditions and let $\det(A) \in I_A$ and $\det(B) \in I_B$ be determinants with antidiagonals $A$ and $B$ respectively such that $A \cup B=X$ and $A\cap B \ne \varnothing$.  Then $\det(X)$ is in $I_A \cap I_B$.  
\end{lemma}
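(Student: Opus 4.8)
The plan is to prove the two containments $\det(X)\in I_A$ and $\det(X)\in I_B$ separately; by the symmetry of the hypotheses in $A$ and $B$ it suffices to treat $\det(X)\in I_A$. Since $\det(A)$ is the Fulton generator of $I_A$ with antidiagonal $A$, the northwest rank condition responsible for it provides a northwest rectangle $R_A=[1,p]\times[1,q]$ and a rank bound $|A|-1$ such that $A\subseteq R_A$ and $I_A$ contains every minor of $R_A$ of size at least $|A|$; indeed the ideal of $|A|\times|A|$ minors of $R_A$ already contains all larger minors of $R_A$ by Laplace expansion. Writing $n=|X|$ and letting $M_X$ denote the $n\times n$ submatrix of $M$ on the rows and columns occurring in $X$, so that $\det(X)=\det(M_X)$, I would identify the northwest block $N_0$ of $M_X$ consisting of those rows of $X$ that lie in $[1,p]$ and those columns of $X$ that lie in $[1,q]$. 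Every entry of $N_0$ lies in $R_A$, so every minor of $N_0$ of size at least $|A|$ lies in $I_A$.

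First I would record the following determinantal fact: if an $n\times n$ matrix has a northwest $\rho\times\gamma$ block $N_0$ with $\rho+\gamma\ge n+t$, then its determinant lies in the ideal generated by the $t\times t$ minors of $N_0$. This follows from a two-step Laplace expansion: expanding $\det(M_X)$ along its first $\rho$ rows, each column set $S$ that appears satisfies $|S\cap\{1,\dots,\gamma\}|\ge \rho+\gamma-n\ge t$, so the corresponding $\rho\times\rho$ minor has at least $t$ of its columns inside $N_0$; expanding that minor in turn along those columns exhibits it as a combination of minors of $N_0$ of size at least $t$, each of which lies in $I_A$. Applying this with $t=|A|$, with $\rho$ the number of rows of $X$ in $[1,p]$ and $\gamma$ the number of columns of $X$ in $[1,q]$, reduces the problem to the single inequality $\rho+\gamma\ge n+|A|$.

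The crux is this counting inequality, and it is where the staircase geometry of antidiagonals enters. Each cell of $A$ contributes to both $\rho$ and $\gamma$, since $A\subseteq R_A$; this already accounts for $2|A|$. For a cell $x$ of $X\setminus A$ I would use that $A$ is nonempty and that $A$ and $X$ are decreasing staircases: reading $X$ from northeast to southwest, either some cell of $A$ is weakly southwest of $x$, forcing $\mathrm{row}(x)\le p$, or some cell of $A$ is weakly northeast of $x$, forcing $\mathrm{col}(x)\le q$, and these two possibilities cannot both fail. Hence each of the $n-|A|$ cells of $X\setminus A$ contributes at least $1$ to $\rho+\gamma$, giving $\rho+\gamma\ge 2|A|+(n-|A|)=n+|A|$, as required. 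Combining the three steps yields $\det(X)\in I_A$, and the same argument with $B$ in place of $A$ gives $\det(X)\in I_B$, so $\det(X)\in I_A\cap I_B$.

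I expect the main obstacle to be this counting inequality rather than the determinantal identity: the identity is a routine double Laplace expansion, whereas the inequality is exactly where one must exploit that $X$ is a single antidiagonal containing $A$, so that no cell of $X$ can escape both the row range and the column range cut out by $A$. Note that the argument uses only $A\subseteq X$ and $B\subseteq X$ with $X$ an antidiagonal; the hypothesis $A\cap B\ne\varnothing$ serves to guarantee that $X=A\cup B$ is a single connected antidiagonal, which is the regime in which $\det(X)$ is defined.
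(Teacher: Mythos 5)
Your proof is correct, but it proceeds by a genuinely different route than the paper's. The paper argues set-theoretically: it takes a matrix in $V(I_A)$, uses the rank bound $\le |A|-1$ on the rectangle northwest of $A$, and propagates it by the principle that adjoining one row or column raises rank by at most one, concluding that the $|X|\times|X|$ submatrix on the rows and columns of $X$ is singular; this shows $V(I_A)\subseteq V(\det(X))$ and hence $\det(X)\in I_A$ only via the (true but implicit) fact that ideals of northwest rank conditions are radical. You instead prove ideal membership directly: the double Laplace expansion reduces everything to the counting inequality $\rho+\gamma\ge n+|A|$, which you verify by noting that every cell of $A$ contributes to both counts while every cell of $X\setminus A$, being strictly northeast or strictly southwest of a cell of $A$ along the antidiagonal $X$, contributes to at least one. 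Your counting inequality is really the same combinatorial content as the paper's rank bookkeeping (its chain of bounds $l$, then $l+k-c$, then $+t$ is the dual statement that the cells of $X$ outside the rectangle cost at most one rank each), but your version has the advantage of exhibiting $\det(X)$ explicitly in the ideal generated by the $|A|\times|A|$ minors of the defining rectangle, so it needs no appeal to reducedness or the Nullstellensatz and works verbatim for the scheme-theoretic ideals. Your closing observation is also accurate: both arguments use only $A\subseteq X$ (and $B\subseteq X$) with $X$ an antidiagonal, which is exactly the generality in which the lemma is later invoked.
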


\begin{proof}
Let $V_X=V(\det(X))$, $V_A=V(I_A)$ and $V_B=V(I_B)$ be the varieties corresponding to the ideals $\langle \det(X) \rangle$, $I_A$ and $I_B$.  It is enough to show that $V_A \subseteq V_X$ and $V_B \subseteq V_X$.  

We will show that given a matrix with antidiagonal $X$ with a sub-matrix with antidiagonal $A \subseteq X$ where the sub-matrix northwest of the cells occupied by $A$ has rank at most $\text{length}(A)-1$ then the full matrix has rank at most $\text{length}(X)-1$.  The corresponding statement for sub-matrix with antidiagonal $B$ can be proven by replacing $A$ with $B$ everywhere.  

The basic idea of this proof is that we know the rank conditions on the rows and columns northwest of those occupied by $A$.  The rank conditions given by $A$ then imply other rank conditions as adding either a row or a column to a sub-matrix can increase its rank by at most one.  

Let $k$ be the number of rows, also the number of columns in the antidiagonal $X$.  Let the length of $A$ be $l+1$, so the rank condition on all rows and columns northwest of those occupied by $A$ is at most $l$.  Assume that the rightmost column of $A$ is $c$ and the leftmost column of $A$ is $t+1$.  Notice that this implies that the bottom row occupied by $A$ is $k-t$, as the antidiagonal element in column $t+1$ is in row $k-t$.  Thus, the northwest $(k-t) \times c$ of matrices in $V_A$ has rank at most $l$.  

Notice $c \ge (t+1)+(l+1)$, with equality if $A$ occupies a continuous set of columns, so matrices in $V_A$ have rank at most $l$ in the northwest $(k-t) \times (t+l+2)$.  Adding $k-c \le k-(n-t-l-2)$ columns to this sub-matrix gives a $(k-t)\times k$ sub-matrix with rank at most $l+k-c \le r+(k-t-l-2)=k-t-2$.  Further, by the same principle, moving down $t$ rows, the northwest $k \times k$, i.e. the whole matrix with antidiagonal $X$, has rank at most $k-t-2+t=k-2$, hence has rank at most $k-1$ and so is in $V_X$.  

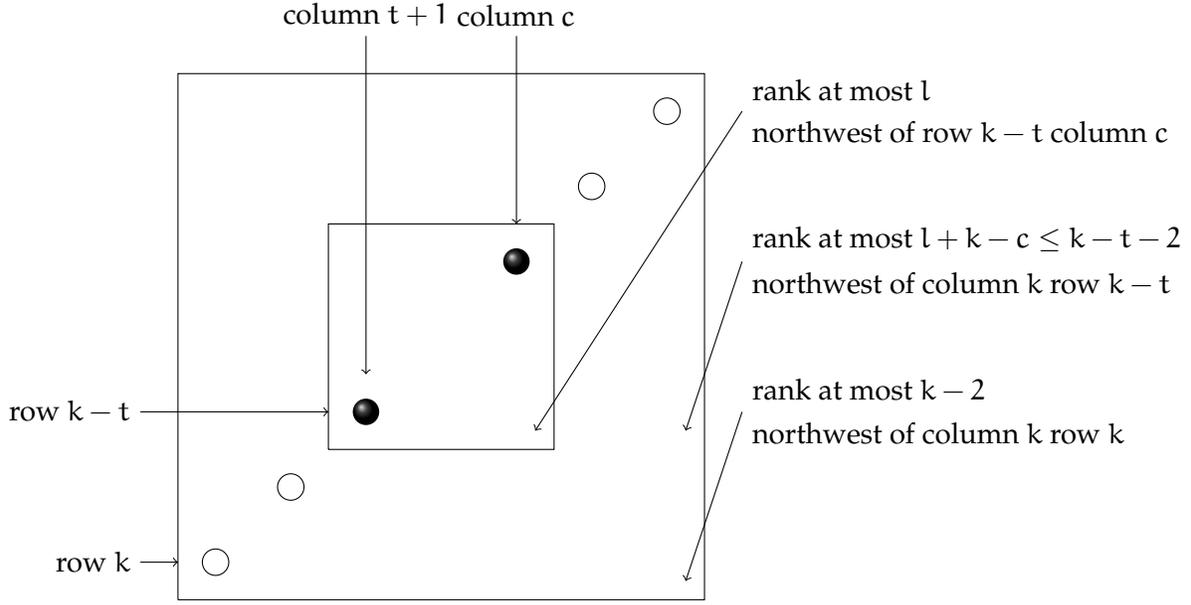
\begin{figure}
\begin{center}
\begin{tikzpicture}
\draw (0,0)--(0,7)--(7,7)--(7,0)--(0,0);
\draw (2,2)--(2,5)--(5,5)--(5,2)--(2,2);
\draw (.5,.5) circle(5pt) node{};
\draw (1.5,1.5) circle(5pt) node{};
\draw (5.5,5.5) circle(5pt) node{};
\draw (6.5,6.5) circle(5pt) node{};
\shade [ball color=black] (2.5,2.5) circle(5pt) node{};
\shade [ball color=black] (4.5,4.5) circle(5pt) node{};
\draw [->] (-.5, 2.5)--(2,2.5);
\node [left] at (-.5,2.5) {row $k-t$};
\draw [->] (-.5, .5)--(0,.5);
\node [left] at (-.5,.5) {row $k$};
\draw [->] (2.5,7.5)--(2.5,3);
\node [above] at (2.5,7.5) {column $t+1$};
\draw [->] (4.5,7.5)--(4.5,5);
\node [above] at (4.5,7.5) {column $c$};
\draw[->](7.5,6.5)--(4.75,2.25);
\node [above right] at (7.5,6.5) {rank at most $l$};
\node [below right] at (7.5,6.5) {northwest of row $k-t$ column $c$};
\draw[->](7.5,4.5)--(6.75,2.25);
\node [above right] at (7.5,4.5) {rank at most $l+k-c \le k-t-2$};
\node [below right] at (7.5,4.5) {northwest of column $k$ row $k-t$};
\draw[->](7.5,2.5)--(6.75,.25);
\node [above right] at (7.5,2.5) {rank at most $k-2$};
\node [below right] at (7.5,2.5) {northwest of column $k$ row $k$};
\end{tikzpicture}
\caption{The proof of Lemma \ref{lem:antdiagsGlue}. The antidiagonal cells in $A$ are marked in black and the antidiagonal cells in $X- A \subseteq B$ are marked in white.  }
\label{fig:nwranklemma}
\end{center}
\end{figure}

\end{proof}

For a visual explanation of the proof of Lemma \ref{lem:antdiagsGlue} see Figure \ref{fig:nwranklemma}. 




\begin{lemma}\label{lem:ginI}
$g_{A_1, \ldots , A_r} \in I_{i}$ for $1 \le i \le r$ and hence 
\[
\langle g_{A_1, \dots, A_r}: A_i \text{ ranges over all antidiagonals for Fulton generators of } I_i \rangle \subseteq \cap_{i =1}^r I_i.
\]
\end{lemma}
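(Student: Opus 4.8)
Fix $i$ and prove $g_{A_1,\ldots,A_r}\in I_i$; the displayed inclusion is then immediate. By construction $g_{A_1,\ldots,A_r}$ is a product $\prod_k \det(B_k)$ of the determinants of the antidiagonals $B_k$ extracted in Step (4), and the factors coming from different connected components multiply independently. Since $I_i$ is an ideal, it therefore suffices to exhibit one factor $\det(B)$ that lies in $I_i$, and for this I may restrict attention to the component $C$ that contains the distinguished antidiagonal $A_i$: I only need one factor extracted from $C$ to lie in $I_i$.

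\textbf{The clean mechanism and the tool.} Because $\det(A_i)$ is a Fulton generator we already have $\det(A_i)\in I_i$, and $I_i$ is radical (Fulton's theorem), so ``$\det(B)\in I_i$'' is equivalent to ``$\det(B)$ vanishes on $V(I_i)$.'' Writing $N$ for the smallest northwest submatrix containing $A_i$, membership $\det(A_i)\in I_i$ says precisely that every matrix in $V(I_i)$ has $\rank(N)\le \ell-1$, where $\ell=\mathrm{length}(A_i)$. Lemma \ref{lem:antdiagsGlue} is exactly the device that turns this into a statement about longer antidiagonals: applied with its $A:=A_i$ and $B:=X:=B$, any extracted $B\supseteq A_i$ satisfies $A_i\cup B=B$ and $A_i\cap B=A_i\neq\varnothing$, whence $\det(B)\in I_i$. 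More generally, the ``one extra row or column raises the rank by at most one'' bookkeeping in the proof of Lemma \ref{lem:antdiagsGlue} shows $\det(B)\in I_i$ as soon as $B$ carries at least $\ell$ of its cells inside $N$. Thus, whenever the extraction keeps the rank obstruction of $A_i$ inside a single factor, we are done.

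\textbf{The recursion.} I would then run the recursion of Step (4) on $C$ and induct on the number of boxes. If the longest antidiagonal $B$ just extracted is disjoint from $A_i$, then $A_i$ survives intact inside a strictly smaller sub-diagram, and the inductive hypothesis supplies a factor in $I_i$ from there. The substantive case is when $B$ overlaps $A_i$ but does not contain it, so that removing $B$ \emph{splits} $A_i$ into shorter sub-antidiagonals. Here one must show that, despite the split, some extracted factor still realizes the full rank condition of $A_i$. My plan is an exchange argument: $A_i$ runs from the east edge of $N$ to its south edge, so rerouting an extracted antidiagonal through $A_i$ inside $N$ can never shorten it, and the selection rules ``longest, and northwestmost among longest'' should then force the chosen antidiagonals to cover $A_i$'s rank condition within one factor.

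\textbf{The main obstacle.} I expect the split case to be the crux, and it is genuinely delicate: when $B$ crosses $A_i$ their union need not be an antidiagonal (it can repeat a row or column), so Lemma \ref{lem:antdiagsGlue} does not apply directly, while an orphaned sub-antidiagonal of $A_i$ is on its own too short to carry the rank obstruction. Making the exchange rigorous---checking that the rerouted box sequence $B_{\text{NE}}\cup A_i\cup B_{\text{SW}}$ is a single antidiagonal lying in one connected component, and ruling out the entry/exit cells of $B$ that are incomparable to the endpoints of $A_i$ in the northwest order on boxes---is exactly where the precise ``longest/northwestmost'' choice in Step (4) must be used in an essential way. By contrast, the rank bookkeeping of the second step is a direct reprise of Lemma \ref{lem:antdiagsGlue} and should require no new idea.
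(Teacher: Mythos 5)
Your reduction to a single factor and your use of Lemma~\ref{lem:antdiagsGlue} when an extracted antidiagonal contains $A_i$ both match the paper. But the case you yourself identify as the crux --- an extracted antidiagonal that meets $A_i$ without containing it --- is left unproved: you sketch an ``exchange argument'' and explicitly concede that making it rigorous is an unresolved obstacle. That is a genuine gap, and the exchange route is also aimed at the wrong target.

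The observation you are missing is that $I_i$ is generated by \emph{northwest} rank conditions, so it contains not only $\det(A_i)$ but the determinant of every antidiagonal of length $|A_i|$ lying weakly northwest of $A_i$: any such minor sits inside the northwest submatrix on which $A_i$ imposes rank at most $|A_i|-1$. With this in hand the paper argues as follows. Let $S$ be the \emph{first} antidiagonal extracted by the algorithm that meets $A_i$. At that moment every box of $A_i$ is still present in the component (the dots of color $i$ are joined by segments, so they lie in one component), hence $A_i$ is itself a candidate and the ``longest'' rule forces $|S|\ge|A_i|$; combined with the northwestmost tie-break, $S$ contains a sub-antidiagonal $B$ with $|B|=|A_i|$ weakly northwest of $A_i$. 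Then $\det(B)\in I_i$ by the northwest-propagation just described, and Lemma~\ref{lem:antdiagsGlue} applied to $B\subseteq S$ yields $\det(S)\in I_i$ outright. In particular the factor $\det(S)$ already does the job in your ``split'' case: there is no need to recurse past $S$, to track the orphaned pieces of $A_i$, or to reroute $S$ through $A_i$ so that one factor ``realizes the full rank condition of $A_i$'' --- it need only realize the rank condition of \emph{some} generator of $I_i$, and a weakly-northwest translate of $A_i$ inside $S$ suffices.
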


\begin{proof}
Fix $i$.  Let $S$ be the first antidiagonal containing a box occupied by a box contained in $A_i$ added to $g_{A_1, \ldots , A_r}$.  We shall show that $\det(S)$ is in $I_i$ and hence $g_{A_1, \ldots , A_r} \in I_i$ as it is a multiple of $\det(S)$.  If $A_i \subseteq S$ then $\det(S) \in I_i$ either because $S=A_i$ or $S \subsetneq A_i$ in which case we apply Lemma  \ref{lem:antdiagsGlue}.  Otherwise, $|S| \ge |A_i|$ and $S$ is weakly to the northwest of $A_i$.   Therefore, there is a subset $B$ of $S$ such that $|B|=|A_i|$, and $B$ is weakly northwest of $A_i$.  Hence, $B$ is an antidiagonal for some determinant in $I_i$, and again by Lemma \ref{lem:antdiagsGlue} $\det(S) \in I_i$.  
\end{proof}

\begin{lemma}\label{initglemma}
$\init g_{A_1, \dots, A_r} =A_1 \cup \cdots \cup A_r$ under any antidiagonal term order.  
\end{lemma}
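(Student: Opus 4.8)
The plan is to reduce the computation of $\init g_{A_1,\ldots,A_r}$ to two inputs: the multiplicativity of the leading-term map under any term order, and the single identity $\init\det(B)=B$ already recorded in the excerpt for an antidiagonal term order. By construction the generator is a product $g_{A_1,\ldots,A_r}=\prod_j \det(B_j)$, where $B_1,B_2,\ldots$ are exactly the antidiagonals peeled off in Step (4) of the algorithm as it processes the connected components. Since $\init(fg)=\init(f)\,\init(g)$ for any term order, I would immediately write
\[
\init g_{A_1,\ldots,A_r}=\prod_j \init\det(B_j)=\prod_j B_j ,
\]
the last equality being the stated fact that the leading term of a minor is its antidiagonal term.

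Next I would pin down the combinatorial content, namely that the peeled antidiagonals $B_j$ form a set partition of $A_1\cup\cdots\cup A_r$. This is essentially built into the algorithm. A box of the diagram carries a dot precisely when it lies in some $A_i$, so the occupied boxes are exactly the cells of $A_1\cup\cdots\cup A_r$. Each application of Step (4) deletes the cells of the chosen $B_j$ from the diagram before the process continues, and the recursion on a component halts only when no boxes remain; since each pass removes at least one box, every occupied box is deleted in exactly one $B_j$. Hence the $B_j$ are pairwise disjoint and $\bigsqcup_j B_j = A_1\cup\cdots\cup A_r$ as sets of cells.

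With the partition in hand the conclusion is immediate. Because the $B_j$ are disjoint, the monomial $\prod_j B_j$ is squarefree and equals the product of the variables indexed by $\bigsqcup_j B_j = A_1\cup\cdots\cup A_r$, which, under the convention identifying an antidiagonal with its antidiagonal term, is precisely the monomial $A_1\cup\cdots\cup A_r$. Combining this with the display above yields $\init g_{A_1,\ldots,A_r}=A_1\cup\cdots\cup A_r$, as claimed.

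The one point requiring genuine care, and the step I expect to be the main obstacle, is confirming that each peeled-off $B_j$ really is an antidiagonal of a square submatrix, so that the identity $\init\det(B_j)=B_j$ legitimately applies. Step (4) selects a longest series of boxes, no two in a common row or column, inside a single connected component; I would verify that such a maximal selection is forced to run from northeast to southwest. If two of its cells stood in a northwest–southeast relation, one could use that the component is a union of honest antidiagonals to argue that the chosen series was either not of maximal length or could be replaced by a northwest one of the same length, so that $\det(B_j)$ is an actual minor whose leading term is its antidiagonal $B_j$. Once this antidiagonality is secured, multiplicativity of $\init$ and the disjointness of the $B_j$ finish the argument with no further computation.
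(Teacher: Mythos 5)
Your argument is correct and is essentially the paper's own proof, which simply observes in one sentence that $g_{A_1,\ldots,A_r}$ is a product of determinants whose antidiagonals collectively are $A_1\cup\cdots\cup A_r$; you have merely made explicit the multiplicativity of $\init$, the disjointness of the peeled antidiagonals $B_j$, and the (implicitly assumed) fact that each $B_j$ is a genuine antidiagonal. No gap; the extra care you flag about $B_j$ running northeast to southwest is a reasonable point to spell out but does not change the approach.
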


\begin{proof}
$\init g_{A_1, \dots, A_r}$ is a product of determinants, with collective antidiagonals $A_1 \cup \cdots \cup A_r$.  
\end{proof}

When we combine Lemma \ref{initglemma} and Theorem \ref{thm:initI} we see that $\init \langle g_{A_1, \dots A_r}\rangle = \init (\cap I_i$).  Then, Lemmas \ref{lem:initcont} and \ref{lem:ginI} combine to complete the proof of Theorem \ref{mainthm}.  

Note that Theorem \ref{mainthm} may produce an oversupply of generators.  For example, if $I_1=I_2$, then inputting the same set of $p$ Fulton generators twice results in a Gr\"obner basis of $p^2$ polynomials for $I_1 \cap I_2=I_1=I_2$.  

\bibliography{MatrixSchubertUnionsArxiv1Refs}

\providecommand{\bysame}{\leavevmode\hbox to3em{\hrulefill}\thinspace}
\providecommand{\MR}{\relax\ifhmode\unskip\space\fi MR }
\providecommand{\MRhref}[2]{%
  \href{http://www.ams.org/mathscinet-getitem?mr=#1}{#2}
}
\providecommand{\href}[2]{#2}
\begin{thebibliography}{KMS06}

\bibitem[BB93]{BergeronBilley}
Nantel Bergeron and Sara Billey, \emph{R{C}-graphs and {S}chubert polynomials},
  Experiment. Math. \textbf{2} (1993), no.~4, 257--269. \MR{1281474
  (95g:05107)}

\bibitem[Ful92]{Fulton1992}
William Fulton, \emph{Flags, {S}chubert polynomials, degeneracy loci, and
  determinantal formulas}, Duke Math. J. \textbf{65} (1992), no.~3, 381--420.
  \MR{1154177 (93e:14007)}

\bibitem[GS]{M2}
Daniel~R. Grayson and Michael~E. Stillman, \emph{Macaulay2, a software system
  for research in algebraic geometry}, Available at
  http://www.math.uiuc.edu/Macaulay2/.

\bibitem[KM05]{KnutsonMiller2005}
Allen Knutson and Ezra Miller, \emph{Gr\"obner geometry of {S}chubert
  polynomials}, Ann. of Math. (2) \textbf{161} (2005), no.~3, 1245--1318.
  \MR{2180402 (2006i:05177)}

\bibitem[KMS06]{KMS}
Allen Knutson, Ezra Miller, and Mark Shimozono, \emph{Four positive formulae
  for type {$A$} quiver polynomials}, Invent. Math. \textbf{166} (2006), no.~2,
  229--325. \MR{2249801 (2007k:14098)}

\bibitem[Knu]{KnutsonFrob}
Allen Knutson, \emph{Frobenius splitting, point-counting and degeneration},
  Preprint, {\tt arXiv:0911.4941v1}.

\bibitem[MS05]{MillerSturmfels}
Ezra Miller and Bernd Sturmfels, \emph{Combinatorial commutative algebra},
  Graduate Texts in Mathematics, vol. 227, Springer-Verlag, New York, 2005.
  \MR{2110098 (2006d:13001)}

\bibitem[Zel85]{Zelevinski}
A.~V. Zelevinski{\u\i}, \emph{Two remarks on graded nilpotent classes}, Uspekhi
  Mat. Nauk \textbf{40} (1985), no.~1(241), 199--200. \MR{783619 (86e:14027)}

\end{thebibliography}
\bibliographystyle{amsalpha}

\end{document}